\newtheorem{theorem}{Theorem}
\newtheorem{lemma}[theorem]{Lemma}
\begin{document}

\title[KdV on a tree with unbounded root and edges]{Linearized Korteweg -- De Vries equation on a tree with unbounded root and edges}
\author{Maqsad I. Akhmedov$^*$ }
\author{Doniyor Babajanov$^\maltese$}
\author{Marks Ruziboev$\dagger$}
\address{$*$Yeoju Technical Institute in Tashkent, 156 Usman Nasyr Str., 100121, Tashkent}
\email{maqsad.ahmedov@mail.ru}
\address{$^\maltese$Ajou University in Tashkent, 113 Asalobod Str., 100204, Tashkent; \newline AKFA University, 10 Kukcha Darvoza 1-deadlock Street, Shaykhantokhur district, Tashkent}
\email{d.b.babajanov@gmail.com}
\address{$\dagger$Faculty of Mathematics, University of Vienna, Oskar Morgensternplatz 1 1090 Vienna, Austria}
\email{ruziboev@univie.ac.at}

\begin{abstract}
We investigate the linearized KdV equation on a metric tree consisting of three different types of bonds:  incoming unbounded root, two finite bonds, and four outgoing unbounded bonds. Under natural assumptions at the vertices, we obtain the uniqueness of a solution. To show the existence we use the theory of potentials and reduce the problem to a system of linear algebraic equations. We show that the latter is uniquely solvable under conditions of the uniqueness theorem. Also, we show that the system we consider can be used to model wave propagation in pipelines.
\end{abstract}

\keywords{Third order PDE, boundary value problem, method of energy integrals, method of potentials, initial condition, boundary condition, integral equation.}
\thanks{The research of MR is funded by Lise Meitner Fellowship FWF M-2816 of the Austrian Research Fund FWF}

\maketitle

\section{Introduction}
The Korteweg-de Vries (KdV) equation
was first obtained in \cite{KdV} as a model for the  propagation of  long waves on  shallow water surface. Since then, it has attracted much attention of both physicists and mathematicians. It is involved in a models of a wide variety of physical processes especially those exhibiting shock waves, traveling waves, and solitons. It is used in fluid dynamics, aerodynamics, and continuum mechanics as a model for shock wave formation, solitons, turbulence, boundary layer behavior, and mass transport. It has been studied and applied for many decades.  The KdV equation has many remarkable properties, including the property discovered by Gardner \cite{Gard}: it can be solved exactly, as an initial-value problem, starting with arbitrary initial data in a suitable space. This discovery was revolutionary, and it drew the interest of many scholars. We note especially the work of Zakharov and Faddeev \cite{ZF}, who showed that the KdV equation is a nontrivial example of an infinite-dimensional Hamiltonian system that is completely integrable. The KdV equation is particularly notable as the prototypical example of an exactly solvable model.  For instance, one way to solve the KdV equation is to use the inverse scattering transform. The solutions in turn include prototypical examples of solitons. On the other hand, one can study the linearized KdV, which provides an asymptotic description of linear, unidirectional, weakly dispersive long waves, for example, shallow water waves. Earlier, it was proven that via normal form transforms, the solution of the KdV equation can be reduced to the solution for the linear KdV equation \cite{Whit}.  Belashov and Vladimirov \cite{BV} numerically investigated the evolution of a single disturbance $u\left( 0,x \right)={{u}_{0}}\exp (-{{x}^{2}}/{{l}^{2}})$ and showed that in the limit $l\to 0,{{u}_{0}}{{l}^{2}}=const,$ the solution of the KdV equation is qualitatively similar to the solution of the linearized KdV equation. Boundary value problems on half lines were considered in \cite{BF, CC, Dj, FS, Hol}\footnote{We don't aim in this notes to give  a complete  account of progress in this rich theory and so our list of references by no means is exhaustive.}.

In recent years partial differential equations on metric graphs, known as networks, became an object of extensive study as they arise in certain branches of physics and biology. In this direction the study  was mainly concentrated on Schr\"odinger equation. In recent works \cite{SUA, SAU, ASE} solutions of linearized  KdV on star graphs were obtained, while  local well-posedness  for KdV on star graphs were studied \cite{AC, CC, C}.

In this paper, we investigate the linearized KdV equation on metric trees  which contains three different bonds. The bonds are denoted by ${B}_{j},\,\,j=\overline{1,7}$ the coordinate ${x}_{1}$on ${B}_{1}$ is defined from $-\infty$ to $0$, and coordinates ${x}_{2}$ and ${x}_{3}$ from links ${B}_{2}$ and ${B}_{3}$ from $0$ to $L$, and coordinates ${x}_{4}$ and ${x}_{5}$ and ${x}_{6}$ and ${x}_{7}$ on the bonds ${B}_{4}$ and ${B}_{5}$ and ${B}_{6}$ and ${B}_{7}$ are defined from $0$ to such that on each bond the vertex corresponds to $0$. We denote the graph by $\Gamma$. On each bond we consider the linear equation:
\begin{equation}\label{(1)}
\left( \frac{\partial }{\partial t}-\frac{{{\partial }^{3}}}{\partial x_{j}^{3}} \right){{u}_{j}}({{x}_{j}},t)={{f}_{j}}(x,t),\ \ \ t>0,{{x}_{j}}\in {{B}_{j}},j=\overline{1,7}.
\end{equation}
Below, we will also use the notation $x$ instead of ${{x}_{j}},\,\,j=\overline{1,7}$. We treat a boundary value problem and using the method of potentials, reduce it to a system of integral equations.The solvability of the obtained system of integral equations is proven.

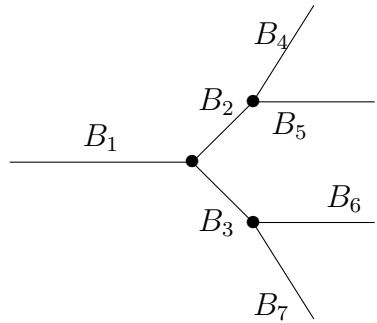
\begin{figure}[h]
\begin{tikzpicture}[scale =0.8]
\draw(-3, 0)--(0,0)(0,0)--(1,1)(1,1)--(2,2.6)(1,1)--(3,1)
(0,0)--(1,-1)(1,-1)--(2, -2.6)(1,-1)--(3, -1);
\node[above] at (-1.5,0){$B_1$};
\node[above] at (.4,0.6){$B_2$};
\node[above] at (1.3,1.7){$B_4$};
\node[below] at (1.6,1){$B_5$};
\node[below] at (.4,-0.6){$B_3$};
\node[above] at (2.5, -1){$B_6$};
\node[below] at (1.3, -2){$B_7$};
\node at (0,0){$\bullet$};
\node at (1,1){$\bullet$};
\node at (1,-1){$\bullet$};
\end{tikzpicture}
\caption{The graph $\Gamma$. The bonds $B_2$ and $B_3$ are compact intervals, $B_1$ is a copy of $(-\infty, 0]$ and $B_4-B_7$ are copies of $[0, +\infty)$.}
\label{graph}
\end{figure}

\section{Formulation of the problems}
We look for solutions  $u\in C^{3,1}(\Gamma\times \mathbb R)$\footnote{This is class of functions that are $C^2$ in $x$ and $C^1$ in $t$.}  vanishing  at $-\infty$, i.e.
\begin{equation}\label{eq:alpha}
\lim_{x\to -\infty} u(x, t)=\lim_{x\to -\infty}\partial_{x}u(x, t)=0, \text{ for all }\,\, t>0,
\end{equation}
and set the Kirchhoff conditions to connect the functions $u_j(x_j)$ defined on each bond of the graph.
\begin{align}
u_1(0,t)=a_2u_2&(0,t)=a_3u_3(0,t), u_{1x}(0,t)=b_2u_{2x}(0,t)=b_3u_{3x}(0,t),\label{(2)}\\
&u_{1xx}(0,t)=\frac{1}{a_2}u_{2xx}(0,t)+\frac{1}{a_{3}}u_{3xx}(0,t),\label{(3)}\\
a_2u_2(L,t)=a_{4}u_4&(0,t)=a_{5}u_{5}(0,t),
b_2u_{2x}(L,t)=b_{4}u_{4x}(0,t)=b_{5}u_{5x}(0,t),\label{(4)}\\
&\frac{1}{a_2}u_{2xx}(L,t)=\frac{1}{a_4}u_{4xx}(0,t)+\frac{1}{a_{5}}u_{5xx}(0,t),\label{(5)}\\
a_3u_3(L,t)=a_6u_6&(0,t)=a_7u_7(0,t), b_3u_{3x}(L,t)=b_6u_{6x}(0,t)=b_7u_{7x}(0,t),\label{(6)}\\
&\frac{1}{a_3}u_{3xx}(L,t)=\frac{1}{a_6}u_{6xx}(0,t)+\frac{1}{a_{7}}u_{7xx}(0,t),\label{(7)}
\end{align}
For $t>0$, where $a_k, b_k, k=\overline{2,7},$ are nonzero constants. Moreover, we assume that the ${{f}_{j}}(x,t), j=\overline{1,7}$ and the initial conditions:
\begin{equation}
u_j(x,0)=u_0(x),k, x\in \overline{B_j}, j=\overline{1,7}, \label{(8)}
\end{equation}
are smooth enough and bounded, and that   satisfies the vertex conditions
(2)-(7). These vertex conditions are not the only possible ones, and the main motivation for our choice is that they guarantee conservation of energy of the solution and, if the solutions decay (to zero) at infinity, the norm (energy) conservation.

Existence and uniqueness of solutions:
\begin{theorem}\label{main} Suppose that
 $$a_{2i}b_{2i}a_{2i+1}b_{2i+1}+\frac{a_{2i+1}b_{2i+1}(a_{2i}+b_{2i})}{a_{2i}}+\frac{a_{2i}b_{2i}(a_{2i+1}+b_{2i+1})}{a_{2i+1}}\neq 0.$$
 for $ i=1,2,3$. Then the problem (1)-(8) has a unique  solution in $C^{3,1}(\Gamma\times \mathbb R)$.
\end{theorem}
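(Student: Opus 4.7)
\emph{Uniqueness.} The plan is to use an energy integral tailored to the vertex conditions \eqref{(2)}--\eqref{(7)}. Let $u=(u_1,\dots,u_7)$ be the difference of two candidate solutions, so each $u_j$ solves the homogeneous equation $\partial_t u_j=\partial_x^3 u_j$ with vanishing initial and decay data and with homogeneous versions of \eqref{(2)}--\eqref{(7)}. I would consider the weighted energy
\begin{equation*}
E(t)=\sum_{j=1}^{7}\alpha_j\int_{B_j} u_j^2(x,t)\,dx,
\end{equation*}
with positive weights $\alpha_j$ to be chosen. Differentiating in $t$, substituting $u_{j,t}=u_{j,xxx}$ and integrating by parts twice on each bond, one obtains
\begin{equation*}
\tfrac{1}{2}\frac{dE}{dt}=\sum_{j=1}^7 \alpha_j\Bigl[\,u_j u_{j,xx}-\tfrac{1}{2}u_{j,x}^2\,\Bigr]_{\partial B_j},
\end{equation*}
plus the decaying contribution at $-\infty$, which vanishes by \eqref{eq:alpha}, and the contribution at $+\infty$ on $B_4,\dots,B_7$ which vanishes for solutions in the function class considered. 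What remains are boundary terms at the three interior vertices.

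\emph{Choice of weights.} The goal is to pick $\alpha_j$ so that the three vertex contributions vanish identically under \eqref{(2)}--\eqref{(7)}. At the root vertex, using $u_1=a_2 u_2=a_3 u_3$ and $u_{1,x}=b_2 u_{2,x}=b_3 u_{3,x}$ together with \eqref{(3)}, the boundary contribution takes the form
\begin{equation*}
\alpha_1\bigl(u_1 u_{1,xx}-\tfrac12 u_{1,x}^2\bigr)-\alpha_2\bigl(u_2 u_{2,xx}-\tfrac12 u_{2,x}^2\bigr)-\alpha_3\bigl(u_3 u_{3,xx}-\tfrac12 u_{3,x}^2\bigr),
\end{equation*}
evaluated at the vertex. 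Expressing $u_2,u_3,u_{2,x},u_{3,x}$ in terms of $u_1,u_{1,x}$ and eliminating $u_{1,xx}$ via \eqref{(3)} yields a homogeneous quadratic form in $(u_1,u_{1,x},u_{2,xx},u_{3,xx})$; this form vanishes identically precisely when the weights satisfy a linear system whose determinant is proportional to
\begin{equation*}
a_{2}b_{2}a_{3}b_{3}+\frac{a_{3}b_{3}(a_{2}+b_{2})}{a_{2}}+\frac{a_{2}b_{2}(a_{3}+b_{3})}{a_{3}},
\end{equation*}
which is exactly the case $i=1$ of the hypothesis. The identical calculation at the other two interior vertices (the $L$-endpoints of $B_2,B_3$, where the indices shift to $(4,5)$ and $(6,7)$) produces the conditions $i=2,3$. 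Under the standing hypothesis the system is solvable with positive weights, so $dE/dt\le 0$; combined with $E(0)=0$ and $E\ge 0$ this gives $E\equiv 0$ and thus $u\equiv 0$.

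\emph{Existence.} For the existence half I would follow the potential-theoretic scheme already used by the authors in the star-graph papers \cite{SUA,SAU,ASE}. First, represent the solution on each bond as the sum of a volume potential that absorbs the forcing $f_j$ and initial datum $u_0$ (built from the fundamental solution of $\partial_t-\partial_x^3$, i.e.\ the Airy kernel) plus three single/double layer potentials with unknown density functions $\mu_j(t)$, $\nu_j(t)$, $\lambda_j(t)$ supported at the endpoints of $B_j$. The standard jump relations for these Airy potentials convert the vertex conditions \eqref{(2)}--\eqref{(7)} into a closed system for the unknown densities; on the finite bonds $B_2,B_3$ the cross-vertex terms produce Volterra-type convolutions with smooth kernels, while the ``instantaneous'' parts (the principal values of the jumps) assemble into a constant matrix. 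A direct inspection shows that this matrix decomposes into three $4\times 4$ blocks, one per vertex, whose determinants are precisely the three expressions in the hypothesis of the theorem. Therefore invertibility of the principal part of the integral system is equivalent to the standing assumption, and the remaining Volterra part is solvable by successive approximations on any finite time interval. Inverting the principal part and solving the Volterra equation yields continuous densities, hence a classical $C^{3,1}$ solution by the smoothing of the Airy potentials.

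\emph{Main obstacle.} The delicate step is the bookkeeping at the interior vertices: checking that precisely the same algebraic quantity governs both the existence of energy-preserving weights in the uniqueness argument and the invertibility of the principal block of the integral system in the existence argument. I would organize the computation so that the $4\times 4$ determinant appears once and can be reused for both halves.
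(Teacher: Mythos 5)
Your existence argument is essentially the paper's: represent the solution by Airy potentials with unknown endpoint densities, apply fractional derivatives of order $1/3$ to the vertex conditions to obtain a Volterra system of the second kind, observe that the instantaneous part is block diagonal with one constant block per vertex, and identify the invertibility of those blocks with the determinant hypothesis; the convolution remainder is then handled by Volterra/Fredholm theory. (One detail: the blocks are $5\times 5$, not $4\times 4$ --- at the root vertex the instantaneous unknowns are $\varphi_1,\varphi_2,\varphi_3,\beta_2,\beta_3$, matched against the five equations coming from \eqref{(2)}--\eqref{(3)}, while the densities $\alpha_2,\alpha_3$ enter only through integral terms.)

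The uniqueness half has a genuine gap. You claim the vertex boundary terms of the weighted energy vanish ``precisely when'' the weights solve a linear system whose determinant is the quantity in the hypothesis. Carrying out the computation at the root vertex, with $u_2=u_1/a_2$, $u_3=u_1/a_3$, $u_{2x}=u_{1x}/b_2$, $u_{3x}=u_{1x}/b_3$ and $u_{1xx}=\frac{1}{a_2}u_{2xx}+\frac{1}{a_3}u_{3xx}$, the vertex contribution to $\tfrac12 dE/dt$ is
\begin{equation*}
\frac{\alpha_1-\alpha_2}{a_2}\,u_1u_{2xx}+\frac{\alpha_1-\alpha_3}{a_3}\,u_1u_{3xx}+\frac12\Bigl(\frac{\alpha_2}{b_2^2}+\frac{\alpha_3}{b_3^2}-\alpha_1\Bigr)u_{1x}^2 .
\end{equation*}
Since $u_1$, $u_{1x}$, $u_{2xx}$, $u_{3xx}$ are independent data at the vertex, the sign-indefinite cross terms force $\alpha_1=\alpha_2=\alpha_3$, and what remains is the requirement $\frac{1}{b_2^2}+\frac{1}{b_3^2}\le 1$ (for dissipation) or $=1$ (for exact conservation). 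No choice of weights produces the determinant condition: the quantity governing the energy estimate is an inequality involving the $b_j$ alone, and this is exactly what the paper's uniqueness lemma assumes ($\frac{1}{b_{2i}^2}+\frac{1}{b_{2i+1}^2}\le 1$), a hypothesis not implied by the determinant condition of the theorem. So the ``main obstacle'' you flag --- making one determinant govern both halves --- cannot be resolved in the way you propose, and your uniqueness argument as written does not close.
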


We first prove the uniqueness, which is the result of the following 
\begin{lemma} Suppose that $\frac{1}{b_{2i}^{2}}+\frac{1}{b_{2i+1}^{2}}\leq 1, i=1,2,3.$ Then the problem (1)--(8) has at most one solution.
\end{lemma}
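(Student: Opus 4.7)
The plan is to invoke the classical energy method. By linearity of the boundary value problem (1)--(8), uniqueness is equivalent to showing that the only solution of the homogeneous problem, obtained by setting $f_j\equiv 0$ and $u_0\equiv 0$ but retaining the decay and vertex conditions (2)--(7), is identically zero. For such a solution $u=(u_1,\ldots,u_7)$, I introduce the energy
$$E(t) := \tfrac{1}{2}\sum_{j=1}^{7}\int_{B_j} u_j(x,t)^2\,dx,$$
which satisfies $E(t)\ge 0$ and $E(0)=0$. The argument then reduces to proving $E'(t)\le 0$, since these three facts together force $E\equiv 0$, hence $u\equiv 0$.

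To compute $E'(t)$, I differentiate under the integral, apply (1), and then use the identity $\int u\,u_{xxx}\,dx=\bigl[u\,u_{xx}-\tfrac{1}{2}u_x^{2}\bigr]_{\partial B_j}$ (two integrations by parts) on each bond. The decay hypothesis (2) on $B_1$ together with the analogous decay at $+\infty$ on $B_4,\ldots,B_7$ (needed in any case to make $E$ well-defined) eliminates the contributions at $\pm\infty$, leaving only the seven endpoints sitting at the three vertices of $\Gamma$. I then regroup those seven boundary terms vertex by vertex.

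At the root, I substitute the relations $u_j(0)=u_1(0)/a_j$ and $u_{j,x}(0)=u_{1,x}(0)/b_j$ for $j=2,3$ taken from (2). The third-order Kirchhoff-type relation (3) then forces the cross terms involving $u\,u_{xx}$ to cancel identically, and what remains is $-\tfrac{1}{2}u_{1,x}(0)^{2}\bigl(1-b_2^{-2}-b_3^{-2}\bigr)$, which is non-positive by the $i=1$ instance of the hypothesis. The same strategy applies verbatim at the two interior vertices: conditions (4)--(5) govern the vertex joining $B_2,B_4,B_5$ and link to the $i=2$ case of the hypothesis, while (6)--(7) govern the vertex joining $B_3,B_6,B_7$ and link to the $i=3$ case. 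Each of the three vertex sums contributes a non-positive quadratic form in the first-derivative values at the vertex, and adding them yields $E'(t)\le 0$, finishing the argument.

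The main technical effort lies entirely in this vertex algebra: one must verify that at each vertex the three equations cooperate so that (i) the cross terms $u\,u_{xx}$ cancel via the Kirchhoff-type second-derivative condition, and (ii) the surviving squared first-derivative terms collapse into a single non-positive quadratic whose coefficient matches the precise inequality in the hypothesis. A minor auxiliary point is the justification of the decay at $+\infty$ on the outgoing half-lines, which should follow from the $C^{3,1}$ regularity combined with the finite-energy requirement implicit in working with $E(t)$.
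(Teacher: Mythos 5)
Your argument is exactly the energy method the paper itself uses: differentiate $\tfrac12\sum_j\int_{B_j}u_j^2\,dx$, integrate by parts twice on each bond to get $\bigl[u\,u_{xx}-\tfrac12 u_x^2\bigr]$ at the endpoints, discard the contributions at $\pm\infty$ via the decay assumptions, and verify vertex by vertex that the first two Kirchhoff relations together with the second-derivative condition cancel the $u\,u_{xx}$ cross terms and leave a non-positive quadratic in the first derivatives --- this matches the paper's proof essentially step for step. The one caveat (which applies equally to the paper's own write-up) is at the two interior vertices: there the surviving term is $\tfrac12 u_{2x}(L)^2\bigl(b_2^2/b_4^2+b_2^2/b_5^2-1\bigr)$ (and similarly with $b_3,b_6,b_7$), so the coefficient carries an extra factor of $b_2^2$ and does not literally ``match the precise inequality in the hypothesis'' as you assert; since the $i=1$ hypothesis forces $b_2^2\ge 1$, this factor is not harmless and would need to be addressed to make either proof airtight.
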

\begin{proof} The uniqueness of the solutions follows the classical path. Namely, we assume by contraction  that there are two solutions $v_k$ and $w_k$ to $(1)-(8)$ on $B_j$, $j=1,..., 7$.  By linearity, $u_k=v_k-w_k$ is a solution of (1) with $f_k=0$.  We will show that $u\equiv 0$ as follows.
Multiplying  both sides of $(1)$ by $u_{k},$ and integrating on $B_{k},$
we have
\[
\frac12\int_{B_k}\frac{\partial u_k^2} {\partial t}dx=\int_{B_k}u_k\frac{\partial u_k} {\partial x^3}dx.
\]
Integrating by parts the right hand side of the latter equation we obtain
\[
\frac12\frac{\partial}{\partial t}\int_{B_k}u_k^2dx=u\cdot\frac{\partial u_k} {\partial x^2}\mid_{\partial B_k}- \int_{B_k}\frac{\partial} {\partial x}u_{kx}^2dx.
\]
For $k=1$ using \eqref{eq:alpha}  we obtain
\[
\frac12\frac{\partial}{\partial t}\int_{B_1}u_1^2dx=u_1(0, t)u_{1xx}(0, t)-\frac12u^2_{1x}(0, t).
\]
Similarly, for $k=2,3$ we have
\[
\frac12\frac{\partial}{\partial t}\int_{B_k}u_k^2dx=u_k(L, t)u_{kxx}(L, t)-u_1(0, t)u_{1xx}(0, t)-\frac12u^2_{kx}(L, t)+\frac12u^2_{kx}(0, t).
\]
Finally, for $k=4,5,6,7$ we have
\begin{align*}
\frac12\frac{\partial}{\partial t}\int_{B_k}u_k^2dx=&\lim_{L\to\infty}\left(u_k(L, t)u_{kxx}(L, t)-\frac12u^2_{kx}(L, t)\right)\\
&+\frac12u^2_{kx}(0, t)-u_1(0, t)u_{1xx}(0, t).
\end{align*}
Defining $\|u\|^{2}_{\Gamma}=\sum^{7}_{k=1}\int_{B_{k}}u^{2}_{k}(x)dx$  and using the conditions \eqref{(2)}-\eqref{(8)} and summing over $k$ the above three equations, we obtain
$$\frac{\partial}{\partial t}\|u\|^{2}_{\Gamma}=2(\frac{1}{b_{2i}^{2}}+\frac{1}{b_{2i+1}^{2}}-1)u^{2}_{1x}(0,t)+2\sum^{7}_{k=1}\int_{B_{k}}f_{k}(x,t)u_{k}(x,t)dx\leq 2\|f\|_{\Gamma}\|u\|_{\Gamma}.$$ Since $f_k=0$ in our case, it follows that $\frac{d}{dt}\|u\|_{\Gamma}\leq 0$ and hence, uniqueness follows.

\end{proof}


\begin{proof}[Proof of Theorem \ref{main} (Existence)]
It remains to prove the existence of solutions. To prove it we we use the following  fundamental solutions of the equation $u_{t}-u_{xxx}=0$
$$
U(x,t;\xi, \eta)=\begin{cases}
\frac{1}{(t-\eta)^{\frac13}}f\left(\frac{x-\xi}{(t-\eta)^\frac{1}{3}}\right), & t>\eta,\\ 0, &t\leq \eta, \end{cases}
$$
$$
V(x,t;\xi, \eta)=\begin{cases}
\frac{1}{(t-\eta)^\frac{1}{3}}\varphi\left(\frac{x-\xi}{(t-\eta)^\frac13}\right), &t>\eta,\\
0, & t\leq \eta,\end{cases}
$$
where $f(x)=\frac{\pi }{\sqrt[3]{3}}Ai\left( -\frac{x}{\sqrt[3]{3}} \right),\varphi (x)=\frac{\pi }{\sqrt[3]{3}}Bi\left( -\frac{x}{\sqrt[3]{3}} \right)$ for $x\ge 0,$ $\varphi (x)=0$ for $x<0$.  Recall that $Ai(x)$ and $Bi(x)$ are the Airy functions\footnote{Airy's functions are defined by improper integrals
$$Ai(x)=\frac{1}{\pi}\int_{0}^{\infty}cos(\frac{t^{3}}{3}+xt)dt, Bi(x)=\frac{1}{\pi}\int_{0}^{\infty}exp(-\frac{1}{3}t^{3}+xt)dt+\frac{1}{\pi}\int_{0}^{\infty}sin(\frac{t^{3}}{3}+xt)dt.$$
We use the following
$$Ai(0)=\frac{1}{3^{\frac{2}{3}}\Gamma(\frac{2}{3})}, Ai'(0)=-\frac{1}{3^{\frac{1}{3}}\Gamma(\frac{1}{3})},\quad
Bi(0)=\frac{1}{3^{\frac{1}{6}}\Gamma(\frac{2}{3})}, Bi'(0)=\frac{1}{3^{-\frac{1}{6}}\Gamma(\frac{1}{3})}.$$}.
The functions $f(x)$ and $\varphi (x)$ are integrable and
$$\int\limits_{-\infty }^{0}{f(x)dx}=\frac{\pi }{3},\int\limits_{0}^{+\infty }{f(x)dx}=\frac{2\pi }{3},\int\limits_{0}^{+\infty }{\varphi (x)dx}=0.$$

Below, we also use fractional integrals [8]:
$$J_{0,t}^{\alpha}f(t):=\frac{1}{\Gamma(\alpha)}\int_{0}^{t}(t-\tau)^{\alpha-1}f(\tau)d\tau, 0<\alpha <1$$
and the inverse of this operator, i.e., the Riemann-Lioville fractional derivatives [8,9], defined by:
$$D_{0,t}^{\alpha}f(t):=\frac{1}{\Gamma(1- \alpha)}\frac{d}{dt}\int_{0}^{t}(t-\tau)^{-\alpha}f(\tau)d\tau, 0<\alpha <1.$$
We look for solution in the form:
\begin{align*}
u_{1}(x,t)=&\int_{0}^{t}U(x,t;0,\eta)\varphi_{1}(\eta)d\eta+F_{1}(x,t);\\
u_{k}(x,t)=&\int_{0}^{t}U(x,t;0,\eta)\varphi_{k}(\eta)d\eta+\int_{0}^{t}U(x,t;L,\eta)\alpha_{k}(\eta)d\eta\\
&\qquad+\int_{0}^{t}V(x,t;0,\eta)\beta_{k}(\eta)d\eta+F_{k}(x,t), k=2,3;\\
u_{i}(x,t)=&\int_{0}^{t}U(x,t;0,\eta)\varphi_{i}(\eta)d\eta+\int_{0}^{t}V(x,t;0,\eta)\psi_{i}(\eta)d\eta+F_{i}(x,t), i=4,5,6,7,
\end{align*}
where
\begin{equation}
F_{k}(x,t)=\frac{1}{\pi}\int_{0}^{t}\int_{B_{k}}U(x,t;\xi,\eta)f_{k}(\xi,\eta)d\xi d\eta, k=\overline{1,7}. \label{(10)}
\end{equation}
Satisfying the conditions \eqref{(2)}, we have:
$$\int_{0}^{t}\frac{1}{(t-\eta)^{\frac{1}{3}}}f(0)\varphi_{1}(\eta)d\eta+F_{1}(0,t)=\int_{0}^{t}\frac{a_{2}}{(t-\eta)^{\frac{1}{3}}}f(0)\varphi_{2}(\eta)d\eta+$$
$$\int_{0}^{t}\frac{a_{2}}{(t-\eta)^{\frac{1}{3}}}\varphi(0)\beta_{2}(\eta)d\eta+\int_{0}^{t}\frac{a_{2}}{(t-\eta)^{\frac{1}{3}}}f(0)f(\frac{-L}{(t-\eta)^{\frac{1}{3}}})\alpha_{2}(\eta)d\eta+a_{2}F_{2}(0,t)=$$
$$\int_{0}^{t}\frac{1}{(t-\eta)^{\frac{1}{3}}}f(0)\varphi_{1}(\eta)d\eta+F_{1}(0,t)=\int_{0}^{t}\frac{a_{3}}{(t-\eta)^{\frac{1}{3}}}f(0)\varphi_{3}(\eta)d\eta+$$
$$\int_{0}^{t}\frac{a_{3}}{(t-\eta)^{\frac{1}{3}}}\varphi(0)\beta_{3}(\eta)d\eta+\int_{0}^{t}\frac{a_{3}}{(t-\eta)^{\frac{1}{3}}}f(0)f(\frac{-L}{(t-\eta)^{\frac{1}{3}}})\alpha_{3}(\eta)d\eta+a_{3}F_{3}(0,t).$$
 Taking  derivatives of order $1/3$ in the above equality and using the properties of fractional derivatives we obtain
\begin{align}
f(0)\varphi_{1}(t)-a_{2}f(0)\varphi_{2}(t)-a_{2}\varphi(0)\beta_{2}(\eta)+\int_{0}^{t}K_{1}\alpha_{2}(\eta)d\eta=\nonumber\\
=\frac{1}{\Gamma(\frac{1}{3})}D_{(0,t)}^{\frac{2}{3}}[a_{2}F_{2}(0,t)-F_{1}(0,t)], \label{(11)}\\
f(0)\varphi_{1}(t)-a_{3}f(0)\varphi_{3}(t)-a_{3}\varphi(0)\beta_{3}(\eta)+\int_{0}^{t}K_{1}\alpha_{3}(\eta)d\eta=\nonumber\\
=\frac{1}{\Gamma(\frac{1}{3})}D_{(0,t)}^{\frac{2}{3}}[a_{3}F_{3}(0,t)-F_{1}(0,t)], \label{(12)}
\end{align}
where
$$
K_{1}=\int_{\eta}^{t}\frac{1}{(t-\tau)^{\frac{2}{3}}(\tau-\eta)^{\frac{1}{3}}}f'(-\frac{L}{(\tau-\eta)^{\frac{1}{3}}})d\eta.
$$
Analogously, letting
\begin{equation*}
K_{2}=\int_{\eta}^{t}\frac{1}{(t-\tau)^{\frac{1}{3}}(\tau-\eta)^{\frac{2}{3}}}f'(-\frac{L}{(\tau-\eta)^{\frac{1}{3}}})d\eta,
\end{equation*}
from the second part of  condition \eqref{(2)} we get
\begin{align}
\nonumber f'(0)\varphi_{1}(t)-b_{2}f'(0)\varphi_{2}(t)-b_{2}\varphi'(0)\beta_{2}(\eta)+\int_{0}^{t}K_{2}\alpha_{2}(\eta)d\eta=\\
=\frac{1}{\Gamma(\frac{2}{3})}D_{(0,t)}^{\frac{1}{3}}[a_{2}F_{2x}(0,t)-F_{1x}(0,t)], \label{(13)}
\end{align}
and
\begin{align}
\nonumber f'(0)\varphi_{1}(t)-b_{3}f'(0)\varphi_{3}(t)-b_{3}\varphi'(0)\beta_{3}(\eta)+\int_{0}^{t}K_{2}\alpha_{3}(\eta)d\eta=\\
=\frac{1}{\Gamma(\frac{2}{3})}D_{(0,t)}^{\frac{1}{3}}[a_{3}F_{3x}(0,t)-F_{1x}(0,t)], \label {(14)}
\end{align}
Also, defining
\[
K_{3}=\int_{0}^{x}U(y+L;t-\eta)dy,
\]
using the vertex condition \eqref{(3)}
we obtain
\begin{align}
\nonumber -\frac{\pi}{3}\varphi_{1}(t)-\frac{1}{a_{2}}\frac{2\pi}{3}\varphi_{2}(t)-\frac{1}{a_{3}}\frac{2\pi}{3}\varphi_{3}(t)+\int_{0}^{t}K_{3}\alpha_{2}(\eta)d\eta=\\
\frac{1}{a_{3}}F_{3xx}(0,t)+\frac{1}{a_{2}}F_{2xx}(0,t)-F_{1xx}(0,t), \label{(15)}
\end{align}
Satisfying the conditions \eqref{(4)}, we have:
\begin{align*}
\int_{0}^{t}\frac{a_{2}}{(t-\eta)^{\frac{1}{3}}}f(\frac{L}{(t-\eta)^\frac{1}{3}})\varphi_{2}(\eta)d\eta+&\int_{0}^{t}\frac{a_{2}}{(t-\eta)^{\frac{1}{3}}}f(0)\alpha_{2}(\eta)d\eta+\\
&\int_{0}^{t}\frac{a_{2}}{(t-\eta)^{\frac{1}{3}}}\varphi(\frac{L}{(t-\eta)^\frac{1}{3}})\beta_{2}(\eta)d\eta+a_{2}F_{2}(L,t)=\\
\int_{0}^{t}\frac{a_{4}}{(t-\eta)^{\frac{1}{3}}}f(0)\varphi_{4}(\eta)d\eta &+\int_{0}^{t}\frac{a_{4}}{(t-\eta)^{\frac{1}{3}}}\varphi(0)\psi_{4}(\eta)d\eta+a_{4}F_{4}(0,t)\\
\int_{0}^{t}\frac{a_{2}}{(t-\eta)^{\frac{1}{3}}}&f(\frac{L}{(t-\eta)^\frac{1}{3}})\varphi_{2}(\eta)d\eta+\int_{0}^{t}\frac{a_{2}}{(t-\eta)^{\frac{1}{3}}}f(0)\alpha_{2}(\eta)d\eta+\\
\int_{0}^{t}\frac{a_{2}}{(t-\eta)^{\frac{1}{3}}}\varphi(\frac{L}{(t-\eta)^\frac{1}{3}})\beta_{2}(\eta)d\eta+&a_{2}F_{2}(L,t)=\\
\int_{0}^{t}\frac{a_{5}}{(t-\eta)^{\frac{1}{3}}}f(0)&\varphi_{5}(\eta)d\eta+\int_{0}^{t}\frac{a_{5}}{(t-\eta)^{\frac{1}{3}}}\varphi(0)\psi_{5}(\eta)d\eta+a_{5}F_{5}(0,t)
\end{align*}
 Taking derivatives of order  1/3 of the above equation and letting
$$
K_{4}=\int_{\eta}^{t}\frac{1}{(t-\tau)^{\frac{2}{3}}(\tau-\eta)^{\frac{1}{3}}}f'(\frac{L}{(\tau-\eta)^{\frac{1}{3}}})d\eta,
$$
we obtain
\begin{align}
\nonumber a_{2}f(0)\alpha_{2}(t)-a_{4}f(0)\varphi_{4}(t)-a_{4}\varphi(0)\psi_{4}(t)+\int_{0}^{t}K_{4}\varphi_{2}(\eta)d\eta+
+\int_{0}^{t}K_{4}\beta_{2}(\eta)d\eta= \\
\label{(16)}=\frac{1}{\Gamma(\frac{1}{3})}D_{(0,t)}^{\frac{2}{3}}[a_{4}F_{4}(0,t)-a_{2}F_{2}(L,t)],\\
\nonumber  a_{2}f(0)\alpha_{2}(t)-a_{5}f(0)\varphi_{5}(t)-a_{5}\varphi(0)\psi_{5}(t)+\int_{0}^{t}K_{4}\varphi_{2}(\eta)d\eta+\int_{0}^{t}K_{4}\beta_{2}(\eta)d\eta=\\
\label{(17)}=\frac{1}{\Gamma(\frac{1}{3})}D_{(0,t)}^{\frac{2}{3}}[a_{5}F_{5}(0,t)-a_{2}F_{2}(L,t)].
\end{align}
Analogously, from the second part of \eqref{(4)} we get
\begin{align}
\nonumber b_{2}f'(0)\alpha_{2}(t)-b_{4}f'(0)\varphi_{4}(t)-b_{4}\varphi'(0)\psi_{4}(t)+\int_{0}^{t}K_{5}\varphi_{2}(\eta)d\eta+\int_{0}^{t}K_{5}\beta_{2}(\eta)d\eta=\\
\label{(18)}=\frac{1}{\Gamma(\frac{2}{3})}D_{(0,t)}^{\frac{1}{3}}[b_{4}F_{4x}(0,t)-b_{2}F_{2x}(L,t)],\\
\nonumber b_{2}f'(0)\alpha_{2}(t)-b_{5}f'(0)\varphi_{5}(t)-b_{5}\varphi'(0)\psi_{5}(t)+\int_{0}^{t}K_{5}\varphi_{2}(\eta)d\eta
+\int_{0}^{t}K_{5}\beta_{2}(\eta)d\eta=\\
\label{(19)}=\frac{1}{\Gamma(\frac{2}{3})}D_{(0,t)}^{\frac{1}{3}}[b_{5}F_{5x}(0,t)-b_{2}F_{2x}(L,t)].
\end{align}
Using the vertex condition \eqref{(5)}
we obtain
\begin{equation}\label{(20)}
\begin{aligned}
 -\frac{1}{a_{2}}\frac{\pi}{3}\alpha_{2}(t)-\frac{1}{a_{4}}\frac{2\pi}{3}\varphi_{4}(t)-\frac{1}{a_{5}}\frac{2\pi}{3}\varphi_{5}(t)+\int_{0}^{t}K_{6}\varphi_{2}(\eta)d\eta=\\
=\frac{1}{a_{4}}F_{4xx}(0,t)+\frac{1}{a_{5}}F_{5xx}(0,t)-\frac{1}{a_{2}}F_{2xx}(L,t).
\end{aligned}
\end{equation}
Satisfying the conditions \eqref{(6)}, we have:
$$\int_{0}^{t}\frac{a_{3}}{(t-\eta)^{\frac{1}{3}}}f(\frac{L}{(t-\eta)^\frac{1}{3}})\varphi_{3}(\eta)d\eta+\int_{0}^{t}\frac{a_{3}}{(t-\eta)^{\frac{1}{3}}}f(0)\alpha_{3}(\eta)d\eta+$$
$$\int_{0}^{t}\frac{a_{3}}{(t-\eta)^{\frac{1}{3}}}\varphi(\frac{L}{(t-\eta)^\frac{1}{3}})\beta_{3}(\eta)d\eta+a_{3}F_{3}(L,t)=$$
$$\int_{0}^{t}\frac{a_{6}}{(t-\eta)^{\frac{1}{3}}}f(0)\varphi_{6}(\eta)d\eta+\int_{0}^{t}\frac{a_{6}}{(t-\eta)^{\frac{1}{3}}}\varphi(0)\psi_{6}(\eta)d\eta+a_{6}F_{6}(0,t)$$
$$\int_{0}^{t}\frac{a_{3}}{(t-\eta)^{\frac{1}{3}}}f(\frac{L}{(t-\eta)^\frac{1}{3}})\varphi_{3}(\eta)d\eta+\int_{0}^{t}\frac{a_{3}}{(t-\eta)^{\frac{1}{3}}}f(0)\alpha_{3}(\eta)d\eta+$$
$$\int_{0}^{t}\frac{a_{3}}{(t-\eta)^{\frac{1}{3}}}\varphi(\frac{L}{(t-\eta)^\frac{1}{3}})\beta_{3}(\eta)d\eta+a_{3}F_{3}(L,t)=$$
$$\int_{0}^{t}\frac{a_{7}}{(t-\eta)^{\frac{1}{3}}}f(0)\varphi_{7}(\eta)d\eta+\int_{0}^{t}\frac{a_{7}}{(t-\eta)^{\frac{1}{3}}}\varphi(0)\psi_{7}(\eta)d\eta+a_{7}F_{7}(0,t).$$
Using properties of fractional derivatives, one can obtain the following equalities
\begin{align}
\nonumber a_{3}f(0)\alpha_{3}(t)-a_{6}f(0)\varphi_{6}(t)-a_{6}\varphi(0)\psi_{6}(t)+\int_{0}^{t}K_{4}\varphi_{3}(\eta)d\eta
 +\int_{0}^{t}K_{4}\beta_{3}(\eta)d\eta\\
  \label{(21)}=\frac{1}{\Gamma(\frac{1}{3})}D_{(0,t)}^{\frac{2}{3}}[a_{6}F_{6}(0,t)-a_{3}F_{3}(L,t)],\\
\nonumber a_{3}f(0)\alpha_{3}(t)-a_{7}f(0)\varphi_{7}(t)-a_{7}\varphi(0)\psi_{7}(t)+\int_{0}^{t}K_{4}\varphi_{3}(\eta)d\eta+\int_{0}^{t}K_{4}\beta_{3}(\eta)d\eta=\\
\label{(22)} \frac{1}{\Gamma(\frac{1}{3})}D_{(0,t)}^{\frac{2}{3}}[a_{7}F_{7}(0,t)-a_{3}F_{3}(L,t)]
\end{align}
Analogously,  letting
\[
K_{5}=\int_{\eta}^{t}\frac{1}{(t-\tau)^{\frac{1}{3}}(\tau-\eta)^{\frac{2}{3}}}f''(\frac{L}{(\tau-\eta)^{\frac{1}{3}}})d\eta,
\]
from the second part of this condition we get
\begin{align}
\nonumber b_{3}f'(0)\alpha_{3}(t)-b_{6}f'(0)\varphi_{6}(t)-b_{6}\varphi'(0)\psi_{6}(t)+\int_{0}^{t}K_{5}\varphi_{3}(\eta)d\eta
+\int_{0}^{t}K_{5}\beta_{3}(\eta)d\eta\\
\label{(23)}=\frac{1}{\Gamma(\frac{2}{3})}D_{(0,t)}^{\frac{1}{3}}[b_{6}F_{6x}(0,t)-b_{3}F_{3x}(L,t)],\\
\nonumber b_{3}f'(0)\alpha_{3}(t)-b_{7}f'(0)\varphi_{7}(t)-b_{7}\varphi'(0)\psi_{7}(t)+\int_{0}^{t}K_{5}\varphi_{3}(\eta)d\eta
+\int_{0}^{t}K_{5}\beta_{3}(\eta)d\eta=\\
\label{(24)} \frac{1}{\Gamma(\frac{2}{3})}D_{(0,t)}^{\frac{1}{3}}[b_{7}F_{7x}(0,t)-b_{3}F_{3x}(L,t)].
\end{align}
 Finally, letting
$$ K_{6}=\int_{0}^{x}U(y-L;t-\eta)dy$$
from the vertex condition \eqref{(7)}
we obtain
\begin{equation}\label{(25)}
\begin{aligned}
-\frac{1}{a_{3}}\frac{\pi}{3}\alpha_{3}(t)-\frac{1}{a_{6}}\frac{2\pi}{3}\varphi_{6}(t)-\frac{1}{a_{7}}\frac{2\pi}{3}\varphi_{7}(t)+\int_{0}^{t}K_{6}\varphi_{3}(\eta)d\eta=\\
=\frac{1}{a_{6}}F_{6xx}(0,t)+\frac{1}{a_{7}}F_{7xx}(0,t)-\frac{1}{a_{3}}F_{3xx}(L,t),
\end{aligned}
\end{equation}

We obtained the system of integral equations \eqref{(11)}-\eqref{(25)} with respect to unknowns
$$\Phi(t)=(\varphi_{m}(t),\psi_{n}(t),\alpha_{k}(t),\beta_{l}(t))^{T}, m=\overline{1,7},n=\overline{4,7},k=\overline{2,3},l=2,3$$
I-Bond matrix
\[{{A}_{1}}=\left( \begin{matrix}
   \begin{matrix}
   \begin{matrix}
   \begin{matrix}
   f(0)  \\
   f(0)  \\
\end{matrix}  \\
   {f}'(0)  \\
   {f}'(0)  \\
   -\frac{\pi }{3}  \\
\end{matrix} & \begin{matrix}
   \begin{matrix}
   -{{a}_{2}}f(0)  \\
   0  \\
\end{matrix}  \\
   -{{b}_{2}}{f}'(0)  \\
   0  \\
   -\frac{2\pi }{3{{a}_{2}}}  \\
\end{matrix}  \\
\end{matrix} & \begin{matrix}
   \begin{matrix}
   0  \\
   -{{a}_{3}}f(0)  \\
\end{matrix}  \\
   0  \\
   -{{b}_{3}}{f}'(0)  \\
   -\frac{2\pi }{3{{a}_{3}}}  \\
\end{matrix} & \begin{matrix}
   \begin{matrix}
   -{{a}_{2}}\varphi (0)  \\
   0  \\
\end{matrix}  \\
   -{{b}_{2}}{\varphi }'(0)  \\
   0  \\
   0  \\
\end{matrix} & \begin{matrix}
   \begin{matrix}
   0  \\
   -{{a}_{3}}\varphi (0)  \\
\end{matrix}  \\
   0  \\
   -{{b}_{3}}{\varphi }'(0)  \\
   0  \\
\end{matrix}  \\
\end{matrix} \right)\]
of the coefficients of these unknowns on the off integral part of the system
has a determinant
$$detA_{1}=-\frac{\pi^{3}}{27}(a_{2}b_{2}a_{3}b_{3}+\frac{a_{3}b_{3}(a_{2}+b_{2})}{a_{2}}+\frac{a_{2}b_{2}(a_{3}+b_{3})}{a_{3}})$$
II-Bond matrix
\[{{A}_{2}}=\left( \begin{matrix}
   \begin{matrix}
   \begin{matrix}
   \begin{matrix}
   {{a}_{2}}f(0)  \\
   {{a}_{2}}f(0)  \\
\end{matrix}  \\
   {{b}_{2}}{f}'(0)  \\
   {{b}_{2}}{f}'(0)  \\
   -\frac{\pi }{3}  \\
\end{matrix} & \begin{matrix}
   \begin{matrix}
   -{{a}_{4}}f(0)  \\
   0  \\
\end{matrix}  \\
   -{{b}_{4}}{f}'(0)  \\
   0  \\
   -\frac{2\pi }{3{{a}_{4}}}  \\
\end{matrix}  \\
\end{matrix} & \begin{matrix}
   \begin{matrix}
   0  \\
   -{{a}_{5}}f(0)  \\
\end{matrix}  \\
   0  \\
   -{{b}_{5}}{f}'(0)  \\
   -\frac{2\pi }{3{{a}_{5}}}  \\
\end{matrix} & \begin{matrix}
   \begin{matrix}
   -{{a}_{4}}\varphi (0)  \\
   0  \\
\end{matrix}  \\
   -{{b}_{4}}{\varphi }'(0)  \\
   0  \\
   0  \\
\end{matrix} & \begin{matrix}
   \begin{matrix}
   0  \\
   -{{a}_{5}}\varphi (0)  \\
\end{matrix}  \\
   0  \\
   -{{b}_{5}}{\varphi }'(0)  \\
   0  \\
\end{matrix}  \\
\end{matrix} \right)\]
of the coefficients of these unknowns on the off integral part of the system
has a determinant
$$detA_{2}=-\frac{\pi^{3}}{27}[a_{4}b_{4}a_{5}b_{5}+\frac{a_{5}b_{5}(a_{4}b_{2}+a_{2}b_{4})}{a_{4}}+\frac{a_{4}b_{4}(a_{5}b_{2}+a_{2}b_{5})}{a_{5}}]$$
III-Bond matrix
\[{{A}_{3}}=\left( \begin{matrix}
   \begin{matrix}
   \begin{matrix}
   \begin{matrix}
   {{a}_{3}}f(0)  \\
   {{a}_{3}}f(0)  \\
\end{matrix}  \\
   {{b}_{3}}{f}'(0)  \\
   {{b}_{3}}{f}'(0)  \\
   -\frac{\pi }{3}  \\
\end{matrix} & \begin{matrix}
   \begin{matrix}
   -{{a}_{6}}f(0)  \\
   0  \\
\end{matrix}  \\
   -{{b}_{6}}{f}'(0)  \\
   0  \\
   -\frac{2\pi }{3{{a}_{6}}}  \\
\end{matrix}  \\
\end{matrix} & \begin{matrix}
   \begin{matrix}
   0  \\
   -{{a}_{7}}f(0)  \\
\end{matrix}  \\
   0  \\
   -{{b}_{7}}{f}'(0)  \\
   -\frac{2\pi }{3{{a}_{7}}}  \\
\end{matrix} & \begin{matrix}
   \begin{matrix}
   -{{a}_{6}}\varphi (0)  \\
   0  \\
\end{matrix}  \\
   -{{b}_{6}}{\varphi }'(0)  \\
   0  \\
   0  \\
\end{matrix} & \begin{matrix}
   \begin{matrix}
   0  \\
   -{{a}_{7}}\varphi (0)  \\
\end{matrix}  \\
   0  \\
   -{{b}_{7}}{\varphi }'(0)  \\
   0  \\
\end{matrix}  \\
\end{matrix} \right)\]
of the coefficients of these unknowns on the off integral part of the system
has a determinant
$$detA_{3}=-\frac{\pi^{3}}{27}[a_{6}b_{6}a_{7}b_{7}+\frac{a_{7}b_{7}(a_{6}b_{3}+a_{3}b_{6})}{a_{6}}+\frac{a_{6}b_{6}(a_{7}b_{3}+a_{3}b_{7})}{a_{7}}]$$
Thus under conditions of the theorem the matrix

\[A=\left( \begin{matrix}
   {{A}_{1}} & 0 & 0  \\
   0 & {{A}_{2}} & 0  \\
   0 & 0 & {{A}_{3}}  \\
\end{matrix} \right)\]
is non-singular. According to the asymptotics of Airy functions, the kernels of the integral operators are integrable. Hence, it follows from the uniqueness theorem and Fredholm alternatives that the system of equations has a unique solution. Thus the solvability of the problem is proved.
\end{proof}

\section{Applications of the model}
The model we have studied in this paper can be applied to the wave propagation in the system of water channels and oil pipelines with branching points. Here, we'll show the consistency of our model with a real physical system. 

Notice that usually, the length of pipelines are long enough so that,  the system can be viewed as a metric graph. In this case, functions $u_j$ in the KdV equations have the physical meaning of vertical displacement of the water particles from the equilibrium (unperturbed) state in the channel. To have a realistic vertex (i.e. branching point) boundary conditions, first of all, we have to have the continuity of the displacement $u_j$ at the branching points. Also, it is intuitively clear, that the slope (gradient) of the wave should be continuous at the branching point. Finally taking into account that we are interested in solutions that are rapidly decaying at infinity, we obtain that These imply, that all the parameters $a_j$ and $b_j$ of the boundary conditions (weights) in equation \eqref{(2)} should satisfy $1>a_2>a_3>0$, and $1>b_2>b_3>0$.  We refer to \cite[Remark 1, and Fig. 1]{SUA} for detailed description of these properties. 

Other (remaining) vertex boundary conditions have to be derived using fundamental conservation laws, like mass, energy and momentum conservations. We assume, that the soliton do not lose it's energy during the propagation, that is the common property of solitons. Energy for the wave, dynamics of which is described by KdV equation can be written as \cite{AK}
\begin{equation}
E=\sum_{B_k}\int_{B_k}u_k^2dx.
\end{equation}
By using the total energy conservation law, $dE/dt=0$, we can derive the vertex boundary conditions in equation \eqref{(3)}.

The above shows that  the model considered in this paper is realizable from the point of view of physics.

\end{document}